\documentclass[a4 paper,12pt]{article}
\usepackage{tikz}
\usepackage{amsmath,amsthm,amssymb,amscd,latexsym}
\usetikzlibrary{intersections}
\begin{document}

\begin{center}
\section*{Pizza Race Problem}

Keyue Gao\footnote{Keyue Gao's research supported by NYU's Summer Undergraduate Research Experience(SURE) program}

\end{center}

\begin{abstract}
This paper deals with a problem in which two players share a previously sliced pizza and try to eat as much amount of pizza as they can. It takes time to eat each piece of pizza and both players eat pizza at the same rate. One is allowed to take a next piece only after the person has finished eating the piece on hand. Also, after the first piece is taken, one can only take a piece which is adjacent to already-taken piece. This paper shows that, in this real time setting, the starting player can always eat at least $\frac{2}{5}$ of the total size of the pizza. However, this may not be the best possible amount the starting player can eat. It is a modified problem from an original one where two players takes piece alternatively instead. 
\end{abstract}

\begin{center}
\section{Introduction}
\end{center}

The original problem was first proposed by Peter Winkler \cite{Pe} at ``Building Bridges: a conference on mathematics and computer science in honour of Laci Lov´as,'' in Budapest, August 5-9 2008.  Alice and Bob share a pizza. The pizza is sliced into any number of pieces and each piece with arbitrary size. Alice starts by picking any piece she wants. After that, they pick piece alternatively and they can only take one of the two pieces near the cut. If the total number of piece is even, it can be easily shown that Alice can always get more than $\frac{1}{2}$ of the whole pizza. The challenge lies in the odd-number case. Peter Winkler had previously discovered a configuration with fifteen pieces in which Alice can get no more than $\frac{4}{9}$ of the whole pizza and he conjectured at the conference that there is a strategy which always guarantees Alice more than $\frac{4}{9}$ regardless of different ways of cutting the pizza. 

This conjecture was proved correct in the paper \emph{How to Eat $4/9$ of a Pizza} authored by Kolja Knauer, Piotr Micek and Torsten Ueckerdt \cite{KMU}. In this paper, we will put in an extra parameter: time to modify the orginal problem. Assume that both players spend some time to consume each piece and they are allowed to pick the next piece only after they have done eating. The formal and complete rules are given as follows:
\begin{enumerate}
\item Alice starts by picking any piece she wants. 
\item Afterwards, both players can only take pieces adjacent to the already-eaten pieces. In other words, they always take one of the two available pieces, except for the first and the last piece.
\item Both of them eat the pizza at the same rate. One is allowed to take the next piece only after he or she finishes eating the piece on hand. Therefore, the time it takes to eat one piece of pizza is proportional to its size. The time of taking pizza from the box is negligible. And we assume that after Alice takes her first piece, Bob \emph{immediately} makes a decision and takes one of two pieces he favors.
\item The weights of all pieces are chosen so that both players never finish eating piece at exactly the same time.
\end{enumerate} 
\textbf{Question}: Is there a constant $\alpha \in (0,1)$ such that Alice can eat at least $\alpha$ of the whole pizza regardless of the distribution? If so, what is the best possible $\alpha$?

Clearly $\alpha$ cannot exceed $\frac{1}{2}$. If we make a pizza with two pieces whose difference is arbitrarily small (remember the pieces are not allowed to have the same size), then Alice can never get more than $\frac{1}{2}$. This paper gives a proof that $\alpha>\frac{2}{5}$, but $\frac{2}{5}$ is not necessarily the best possible bound. When the number of pieces is small as 3 or 4, it can be showed Alice can always get more than $\frac{1}{2}$. However, the analysis quickly becomes complicated as the number of pieces increases. Future efforts can be made to find a refiner bound and show that it is the best possible. 

\begin{center}
\section{The proof}
\end{center}

Although the goal is to prove that Alice can get more than $\alpha$ (here we assume that $\alpha \leq \frac{1}{2}$), it is more convenient to examine what happens when she gets less than $\alpha$, or equivalently, Bob gets more than $1-\alpha$. 

Suppose we have found a division of a pizza into n pieces such that Bob has a strategy guarantees him at least $1-\alpha$ of the pizza no matter what Alice does. We label the n pieces from $P_1$ to $P_n$ (which can be also regarded as $P_0$) counterclockwise. Consider Figure \ref{npizza}.

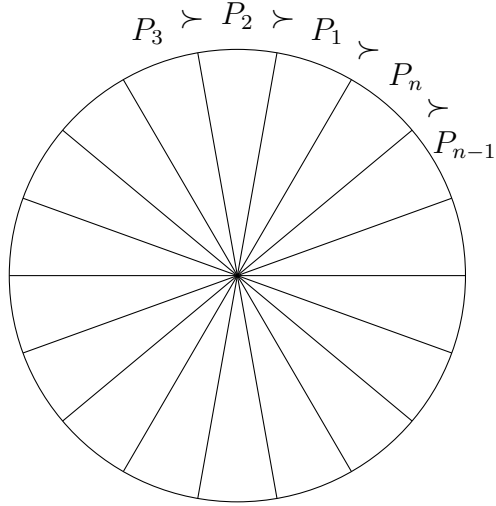
\begin{figure}
\begin{center}
\begin{tikzpicture}[scale=3]
  \draw (0,0) circle (1 cm);
  \foreach \x in {20,40,60,...,360} 
    \draw (0,0) -- (\x:1cm);
    \node at (70:1.15cm) {$P_1$}; 
     \node at (90:1.15cm) {$P_2$};
     \node at (110:1.15cm) {$P_3$};
     \node at (50:1.15cm) {$P_n$};
     \node at (30:1.15cm) {$P_{n-1}$};
   \foreach \x in {40,60,80,100}
     \node at (\x:1.15cm) {$\succ$};
    \foreach \x in {250,270,290}
     \node at (\x:1.15cm) {.};
\end{tikzpicture}
\end{center}
\caption{Bob's favorite pizza. \label{npizza}}
\end{figure}

 If Alice picks $P_1$ at the beginning, by assumption, Bob has a winning strategy which guarantees him more than $1-\alpha$. WLOG, let the winning strategy say that Bob should pick $P_2$. Now, if Alice changes her mind and switches to $P_2$ at the beginning, Bob \emph{cannot} choose $P_1$ because Alice can use the exact same winning strategy which Bob  uses just now against Bob and get more than $1-\alpha$ which is certainly more than $\alpha$. Therefore, the only choice left for Bob is to pick $P_3$ if Bob can win. The same arguement shows that if Alice picks $P_i$ initially, then Bob has to pick $P_{i+1}$. Note that here the assumption that both players never finish eating at the exactly the same time is crucial, because this assumption guarantees that Alice is able to completely copy Bob's strategy. Let's formalize this observation by introducing some notation. 

Let $P_i$ and $P_{i+1}$ be two adjacent pieces, we call $P_i \prec P_{i+1}$ if after two players pick $P_i$ and $P_{i+1}$ at the beginning, there is a strategy ensures that the person who gets $P_{i+1}$ will get more than $1-\alpha$ eventually. And we call that $P_i \sim P_{i+1}$ if both of them, playing their best, get more than $\alpha$ and less than $1-\alpha$. $P_i \preceq P_{i+1}$ means either $P_i \prec P_{i+1}$ or $P_i \sim P_{i+1}$. Since Alice's goal is get more than $\alpha$, it's enough for her to pick a piece $P_i$ such that $P_i \succeq P_{i+1}$ and $P_{i} \succeq P_{i-1}$. The previous observation tells us that if Bob can get more than $1-\alpha$, then it must be that $P_1 \prec P_2, P_2 \prec P_3,...,P_n \prec P_1$ or $P_1 \succ P_2, P_2 \succ P_3,...,P_n \succ P_1$. Now if we want to know when Alice can get more than $\alpha$, we simply take the negation of the \emph{principle of uniform direction} and it will be summarised and proved rigidly in the following key lemma. 

\newtheorem*{Lemma}{Lemma}
\begin{Lemma}
Given a pizza with n pieces from $P_1$ to $P_n$ and a constant $\alpha \leq \frac{1}{2}$, Alice can get more than $\alpha$ if there exist two pair of pieces $P_i, P_{i+1}$ and $P_j, P_{j+1}$ such that $P_i \preceq P_{i+1}$ but $P_j \succeq P_{j+1}$
\end{Lemma}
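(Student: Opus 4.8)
The plan is to convert the real-time game into a purely combinatorial statement about the cyclic string of comparisons around the pizza, and then to settle that statement with a forcing argument that runs once around the circle. The first move is to isolate the reduction already recorded above: it suffices for Alice to produce a single index $k$ that is a \emph{local sink}, meaning that $P_{k-1}\preceq P_k$ and $P_k\succeq P_{k+1}$ hold simultaneously. Once Alice takes $P_k$, Bob must answer with one of the two neighbours $P_{k-1}$ or $P_{k+1}$, and in either case the defining property of $\preceq$ and $\sim$ hands Alice strictly more than $\alpha$ (more than $1-\alpha$ when the relevant relation is strict, and more than $\alpha$ when it is a tie, using $\alpha\le\frac12$). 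So the whole Lemma collapses to the claim that, under its hypotheses, such a sink $k$ must exist.

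Next I would label each of the $n$ cyclic edges $(P_m,P_{m+1})$ as \emph{forward} if $P_m\prec P_{m+1}$, \emph{backward} if $P_m\succ P_{m+1}$, and \emph{tie} if $P_m\sim P_{m+1}$. In this language the two hypotheses read: at least one edge is non-backward (the pair $P_i\preceq P_{i+1}$, which is forward or tie) and at least one edge is non-forward (the pair $P_j\succeq P_{j+1}$, which is backward or tie). Moreover $P_k$ is a sink exactly when its left edge $(P_{k-1},P_k)$ is non-backward and its right edge $(P_k,P_{k+1})$ is non-forward. The task is thus reduced to showing that a non-backward edge together with a non-forward edge forces a sink.

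The heart of the argument is the following forcing step, run by contradiction. Assume that no vertex is a sink. Begin at the non-backward edge $(P_i,P_{i+1})$ supplied by the first hypothesis and examine $P_{i+1}$: its left edge is non-backward, so if its right edge were non-forward then $P_{i+1}$ would be a sink; hence the right edge $(P_{i+1},P_{i+2})$ must be strictly forward. A forward edge is again non-backward, so the identical reasoning at $P_{i+2}$ forces $(P_{i+2},P_{i+3})$ to be forward, and inductively every subsequent edge is forward. Since the indices are cyclic, this propagation sweeps over the entire circle and in particular over the edge $(P_j,P_{j+1})$, which the second hypothesis declares non-forward --- a contradiction. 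Therefore a sink exists, and by the reduction Alice can guarantee more than $\alpha$.

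The step I expect to be most delicate is the bookkeeping around ties, together with the asymmetry of $\prec$. A tie edge must be made to count as \emph{both} non-forward and non-backward, so that (i) the two hypotheses are genuinely ``some edge $\in\{\text{forward},\text{tie}\}$'' and ``some edge $\in\{\text{backward},\text{tie}\}$'', and (ii) whenever a vertex fails to be a sink, the only admissible conclusion about its right edge is that it is \emph{strictly} forward, which is precisely what keeps the induction alive. I would also state explicitly that the cyclic propagation wraps around legitimately --- there is no boundary vertex to interrupt it --- so that it is guaranteed to reach the non-forward edge and close the contradiction.
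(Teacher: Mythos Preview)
Your proof is correct and follows essentially the same approach as the paper: both arguments reduce to finding a ``local sink'' $P_k$ with $P_{k-1}\preceq P_k\succeq P_{k+1}$, and both locate it by walking from the given $\preceq$-edge toward the given $\succeq$-edge, noting that each intermediate vertex which is not already a sink forces its right edge to be strictly $\prec$. Your write-up is more explicit than the paper's about the trichotomy of edge labels, the role of ties, and the cyclic wraparound, but the underlying idea is identical.
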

\begin{proof}
WLOG, assume $j \geq i$. Let's examine the relation between $P_{i+1}$ and $P_{i+2}$. If $P_{i+1} \succeq P_{i+2}$, then Alice should pick $P_{i+1}$ at the beginning. If neither is true, i.e. $P_{i+1} \prec P_{i+2}$, then continue the previous process to check whether $P_{i+2} \succeq P_{i+3}$ or $P_{i+2} \prec P_{i+3}$. Note that the direction of preference cannot be uniform all the way to $P_j$ because $P_j \succeq P_{j+1}$. Some where between $P_i$ and $P_{j+1}$, there is a $P_{k}$ such that $P_k \succ P_{k-1}$ and $P_k \succeq P_{k+1}$.
\end{proof}
Remark: Notice here we allow $i=j$ which means that $P_i \preceq P_{i+1}$ and $P_{i} \succeq P_{i+1}$, so it must be true that $P_{i} \sim P_{i+1}$. A quick corollary of the lemma is that Alice can get more than $\alpha$ if we can find one pair of adjacent pieces $P_{i}$ and $P_{i+1}$ such that $P_{i} \sim P_{i+1}$.

Using this lemma, we can prove the following theorem.
\newtheorem*{theorem}{Theorem}
\begin{theorem}
Alice can eat at least $\frac{2}{5}$ of the whole pizza regardless how the pizza is cut.
\end{theorem}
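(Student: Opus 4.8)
The plan is to argue by contradiction, using the Lemma as the engine. Suppose Alice cannot guarantee more than $\tfrac25$. By the Lemma she already succeeds as soon as some pair satisfies $P_i\preceq P_{i+1}$ while some (possibly different) pair satisfies $P_j\succeq P_{j+1}$, and by its corollary she succeeds whenever any pair is $\sim$. Hence, under our assumption, no pair is $\sim$ and the two strict directions cannot both occur, so every cyclic adjacent pair is strict and points the same way; after relabeling we may take the principle of uniform direction in its strict form $P_1\prec P_2\prec\cdots\prec P_n\prec P_1$. Concretely, this says that in each two-piece starting position $\{P_i,P_{i+1}\}$ the holder of $P_{i+1}$ can force more than $\tfrac35$.

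First I would record a size bound: every piece satisfies $|P_i|<\tfrac25$. Indeed $P_i$ is the losing piece of $P_i\prec P_{i+1}$, so the player who starts on $P_i$ finishes with less than $\tfrac25$; since that player owns at least $P_i$ itself, $|P_i|<\tfrac25$. (This is also what rules out degenerate positions such as a single near-$\tfrac35$ piece winning both of its pairs.)

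The heart of the matter is the local claim that, for every $i$, $P_i\prec P_{i+1}$ together with $|P_i|,|P_{i+1}|<\tfrac25$ forces $|P_{i+1}|<|P_i|$. I would prove the contrapositive: assume $|P_{i+1}|\ge|P_i|$, so by the no-simultaneous-finish hypothesis the player $F$ who starts on the strictly smaller piece $P_i$ finishes it first, and I would exhibit a strategy with which $F$ secures at least $\tfrac25$ of the whole pizza. The phenomenon to exploit is the distinction between \emph{claiming} a piece (beginning to eat it) and \emph{finishing} it: while the opponent is still occupied with the larger piece $P_{i+1}$, $F$ is free to claim frontier pieces from either end uncontested, building a lead the opponent can never recover. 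A careful accounting of how much $F$ can claim during this window, and of how the remainder is split afterwards, is exactly what produces the constant $\tfrac25$ — and, as the introduction warns, it is the crudeness of this worst-case accounting that keeps the bound from being best possible. Granting the claim, $F$ secures at least $\tfrac25$, so the holder of $P_{i+1}$ cannot force more than $\tfrac35$, contradicting $P_i\prec P_{i+1}$; hence $|P_{i+1}|<|P_i|$.

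Applying this around the circle yields $|P_1|>|P_2|>\cdots>|P_n|>|P_1|$, which is absurd, so no configuration lets Bob secure $\tfrac35$ and Alice can always eat at least $\tfrac25$. I expect the main obstacle to be precisely the local claim's strategy-and-accounting step: because a player may claim a large piece and then be occupied for a long time, optimal play is genuinely recursive, and the difficulty is to find a simple, robust strategy for $F$ whose guaranteed share can be estimated cleanly rather than to determine the exact value of the subgame.
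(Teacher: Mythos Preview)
Your reduction is sound up to the point where you invoke the ``local claim'': from the Lemma you correctly derive the strict uniform direction $P_1\prec P_2\prec\cdots\prec P_n\prec P_1$ and the bound $|P_i|<\tfrac25$ for every $i$, and the cyclic size contradiction would indeed finish the proof. But the local claim itself --- that $|P_i|<|P_{i+1}|$ lets the holder $F$ of the smaller piece secure $\tfrac25$ --- is the whole theorem in disguise, and you have not proved it. The heuristic you offer (``while the opponent is occupied with $P_{i+1}$, $F$ claims frontier pieces uncontested, building a lead the opponent can never recover'') does not survive scrutiny: the uncontested window has length only $|P_{i+1}|-|P_i|$, which may be arbitrarily small, so $F$'s lead after the first pieces can be made negligible. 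After that instant the game is a genuine race on the remaining arc with $F$ barely ahead, and nothing prevents the opponent from later claiming a large piece and overtaking. Turning ``$F$ moves first by $\varepsilon$'' into ``$F$ gets $\tfrac25$'' is precisely the global statement you are trying to establish; you have relocated the difficulty rather than dissolved it, and you say as much when you call this step the main obstacle.

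For comparison, the paper does not attempt any such monotonicity of sizes. Its engine is an endgame observation rather than an opening one: since the two players eat at the same rate, the final gap equals the portion of the winner's \emph{last} piece still uneaten when the loser runs out, so a $\tfrac15$ gap forces some piece strictly larger than $\tfrac15$, and together with the bound $|P_i|<\tfrac25$ there can be at most four such ``big'' pieces. The proof is then a finite case analysis on whether there are one, two, three, or four big pieces; in each case one exhibits a concrete pair (or pairs) of adjacent pieces where the $\preceq$ relation points the wrong way and invokes the Lemma. Your framework would be more elegant if the local claim were cheap, but as it stands the paper's endgame reduction is what actually makes the problem finite, and that idea is missing from your outline.
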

\begin{proof}
Suppose there exists a configuration such that Alice gets less than $\frac{2}{5}$ and Bob more than $\frac{3}{5}$. The difference is more than $\frac{1}{5}$. Since Alice and Bob are eating at the same time and at the same rate, the amount of pizza they eat at the beginning is always the same. The difference is created only in the end when one player is still eating, but the other player has nothing to eat. To be precise, the difference is exactly the remaining amount of piece in the winner's hand at the instant when the loser finishes his or her last piece and finds no peice left on the table. Hence, in order to create a difference bigger than $\frac{1}{5}$, there must exist at least one piece whose size is strictly bigger than $\frac{1}{5}$. Furthermore, this big piece is the last one Bob picks and at the instant when Bob picks it, the difference between this big piece and the remainning cluster of pieces must be greater than $\frac{1}{5}$. Also, no piece is bigger than $\frac{2}{5}$ since otherwise Alice can just pick that at the beginning. Clearly, it is impossible to have five such big pieces. Hence, it is suffice to show that Bob cannot get more than $\frac{3}{5}$ if there are 1, 2, 3 or 4 pieces with size more than $\frac{1}{5}$.

\begin{itemize}
\item If there is only one such big piece, then Alice just picks that one at the beginning. 

\item If there are two, label them $A$ and $B$ and  call the size of them $a$ and $b$ respectively. WLOG assume $a<b$. Also, label the remaining two cluster of pieces $C$ and $D$ which have size $c$ and $d$. We consider cases according to their relative size. Consider Figure \ref{2pizza}.

\textbf{CASE I}: $c=0$ i.e. $A$ and $B$ are adjacent 

If both players pick $A$ and $B$ at the beginning, then one one can get more than $\frac{3}{5}$, in other words, $A \sim B$. The quick corollary of the lemma applies.

\textbf{CASE II}: $0<c<a$ 

Since $c \ne 0$, let $C_1$ and $C_2$ be the pieces in $C$ closest to $A$ and $B$ respectively. If the initial two pieces chosen are $A$ and $C_1$, then whoever claims $C_1$ can finish the entire $C$ and gets to $B$ before the other player finishes $A$, which means $C_1 \succeq A$. Similarly, $C_2 \preceq B$, the lemma applies.

\textbf{CASE III}: $b>d$ and $c>0$

Let $D_1$ be the piece in $D$ closest to $B$. Since $b>d$ and $d>c$, $b>c$. Therefore, if Alice starts by choosing $C_2$, the piece in C closest to B, or $D_1$, the piece in D closest to B, and Bob then picks $B$, Alice can get to $A$ before Bob finishes $B$ so that Bob cannot create a difference more than $\frac{1}{5}$. Therefore, $B \prec D_1$ and $B \prec C_1$. Again the lemma applies.

\begin{figure}
\begin{center}
\begin{tikzpicture}[scale=3]
  \draw (0,0) circle (1 cm);
  \foreach \x in {0,80,90,160,170,260,280,350} 
    \draw (0,0) -- (\x:1cm);
    \node at (40:1.15cm) {A}; 
     \node at (125:1.15cm) {C};
     \node at (200:1.15cm) {B};
     \node at (300:1.15cm) {D};
     \node at (270:1.15cm) {$D_1$};
     \node at (85:1.15cm) {$C_1$};
     \node[rotate=-30] at (60:1.15cm) {$\succeq$};
     \node[rotate=90] at (180:1.15cm){$\preceq$};
     \node at (165:1.25cm) {$C_2$};
\end{tikzpicture}
\end{center}
\caption{Two large pieces. \label{2pizza}}
\end{figure}
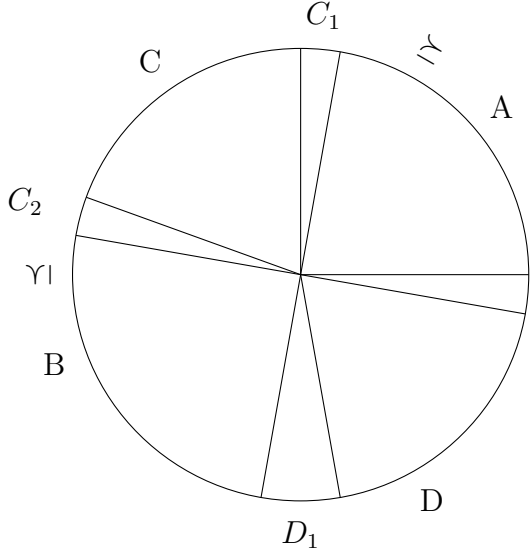

\textbf{CASE IV}: $b<d$ and $c>a$

Since $a, b>\frac{1}{5}$, $c+d<\frac{3}{5}$, which implies $c<\frac{3}{10}$. Therefore, if Alice starts by picking $C_1$ and Bob by picking $A$, when Bob finishes eating A, Alice at least eats as much as A in C. Let $\hat{C}$ denote the remaining pizza of C and the size of $\hat{C}$ is $\hat{c}$. $\hat{c} < \frac{3}{10}-\frac{1}{5}=\frac{1}{10}.$ Alice's strategy is to take pieces in $C$ if possible, until she gets to $B$. Against this strategy, Bob cannot eat pieces in $C$ as well after he finishes $A$ because even if he can get $B$, there is still the largest piece $D$ left, which means Bob cannot create least $\frac{1}{5}$ difference. The best thing Bob can do is to allow Alice to take all of $C$, but he takes pieces in $D$ and gets $B$ just before Alice does, hoping that the difference between $B$ and the remaining of $D$ is larger than $\frac{1}{5}$. However, this is impossible. The remaining of $D$, call it  $\hat{D}$ with size $\hat{d} > d-\hat{c} $  Hence, $b-\hat{d}< b-d+\hat{c}<\hat{d}=\frac{1}{10}<\frac{1}{5}$. Therefore, $C_1 \succeq A$. \\
If $b<c$, then use the same analysis, $C_2 \succeq B$. If $b>c$, $C_2 \succeq B$ as well by quoting the analysis from CASE II. Therefore, $C_2 \succeq B$ no matter what. The lemma applies.

\item If there are three big pieces. Label them $A$, $B$, $C$, and the remaining three clusters of pieces $D$, $E$, $F$. They have size $a$, $b$, $c$, $d$, $e$, $f$ respectively. Consider figure \ref{3pizza}.

Assume Bob can take at least $\frac{3}{5}$ of the whole pizza regardeless of what Alice does, then all the adjacent pieces must have uniform preference order. Let $F_1$ and $E_1$ be the pieces in $F$ and $E$ closest to $A$ and $F_n$ in $F$ the closest to $C$. WLOG, assume $F_1 \prec A$ and $A \prec E_1$. Let Alice start by picking $A$ and Bob $E_1$. When Alice finishes eating $A$, one of the following two situations must occur:
\begin{enumerate}
 \item Bob is still eating some small pieces from $E$ or $F$ and hasn't touch either $B$ or $C$;
 \item Bob eats the entire $F$, i.e. $F_1$ to $F_n$, and gets to $C$.
\end{enumerate}

Note that $a+b+c>\frac{3}{5}$, so $d+e+f<\frac{2}{5}$. If situation 1 is true, then when Alice finishes piece $A$, the remaining size of small pieces of $D$, $E$, $F$ is less than $\frac{1}{5}$, which makes it impossible for Bob to get both of $B$ and $C$. Even though Bob manages to get one of them, while he is eating it, Alice has enough time to claim the last big piece because the each large piece is bigger than $\frac{1}{5}$ but the total size of remaining small pieces is smaller than it. Therefore, Bob cannot win if he wastes his time eating small pieces. 
 
If Bob is eating some big piece, then it cannot be piece $B$. In fact, Bob cannot even touch piece in $F$. Consider the small pieces Bob eats before he takes the big one: $E_1...E_k, F_1...F_m$ (Not necessary representing the order he takes). Since Bob has time to get to the big piece, the size of these small piece must be strictly smaller than $a$. Therefore, it doesn't matter which order Bob taking them. In particular, it makes no difference for Bob to start from $E_1$ or $F_1$. However, recall that agianst Alice picking $A$, Bob picking $E_1$ is a winning strategy, and $F_1$ is a losing one. Therefore, before he gets the large piece, Bob cannot take $F_1$ at all. Otherwise $F_1$ and $E_1$ will be the same strategy against $A$. This means that the big piece Bob takes must be $C$.  

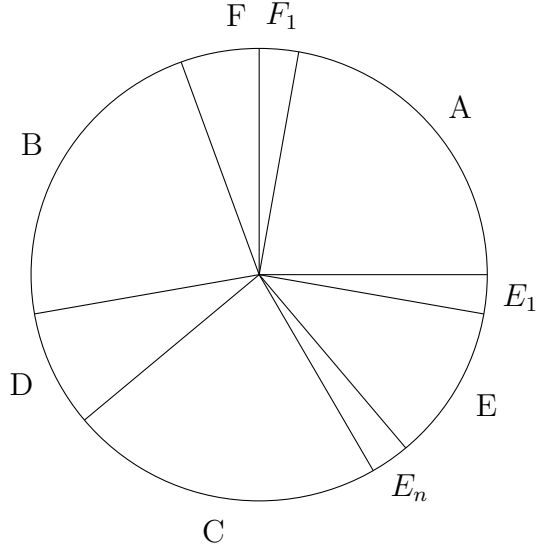
\begin{figure}
\begin{center}
\begin{tikzpicture}[scale=3]
  \draw (0,0) circle (1 cm);
  \foreach \x in {0,80,90,110,190,220,300,310,350} 
     \draw (0,0) -- (\x:1cm);
     \node at (40:1.15cm) {A}; 
     \node at (95:1.15cm) {F};
     \node at (150:1.15cm) {B};
     \node at (205:1.15cm) {D};
     \node at (260:1.15cm) {C};
     \node at (330:1.15cm) {E};
     \node at (85:1.15cm)  {$F_1$};
     \node at (355:1.15cm) {$E_1$};
     \node at (305:1.15cm) {$E_n$};
\end{tikzpicture}
\end{center}
\caption{Three large pieces. \label{3pizza}}
\end{figure}
In order to prevent Alice from getting the last large piece, it is mecessary that $e+c<a+f$ and $e+C<a+d$ because $A$,$F$ and $A$,$D$ are two possible ways for Alice to get to $B$. 

Then suppose Alice starts by picking $B$ and $C$. Again, when Alice finishes her first piece, situation 1 can't happen. Situation 2 is the only hope for Bob. Using the same analysis, we will get four other inequalities: $f+a<b+e$, $f+a<b+d$, $d+b<c+f$ and $d+b<c+e$. Once sum up these six inequalities, each letter occurs exactly twice on both sides of the inequality, so we get $0<0$. Contradiction.

What if two large pieces are adjacent, i.e. $def=0$? WLOG, assume $d=0$ so that $B$ and $C$ are adjacent. I want to show that for any configuration of finally many pieces of pizza, I am allowed to insert a sufficently small extra piece between any two pieces so that the outcome of the game doesn't change. If this is true, then I can insert a small piece between $B$ and $C$ and use the previous result. 
Since the number of pieces of the pizza is finite, label them $P_1,...P_n$. Let $\mathcal{S}=\{ x \mid x=\sum_{i=1}^n c_iP_i, c_i=-1,0,1 \}$. Therefore, $\mathcal{S}$ is all the possible differences a player faces when he or she is making a decision on what to pick next. Clearly $\mathcal{S}$ is finite and there exists a minumum $m$. Let $\epsilon <m$, so wherever I insert a $\epsilon$ size piece between two pieces, the final result won't be changed.

\item If there are four big pieces, then Alice can get at least $\frac{2}{5}$ by picking the smallest one of them. If Bob wants to win, he must take three of the four large pieces, which means that he must finish eating two large pieces (Call them $A$ and $B$ with size $a$ and $b$ respectively) and two small pieces (call them $D$ and $E$ with size $d$ and $e$ respectively) before Alice finishes one large piece $C$ and one small piece $F$ with size $c$ and $f$. Note that each big piece is bigger than $\frac{1}{5}$, so the sum of four small pieces (which can be 0) is less than $\frac{1}{5}$. Then, $a>c$($C$ is the smallest among the four big pieces) and $b>f$, which implies $a+b+d+e>a+b>c+f$. Hence it is impossible for Bob to eat $A$, $B$, $D$ and $E$ before Alice eats $C$ and $F$.
\end{itemize} 
\end{proof}

\begin{center}
\section{further question}
\end{center}

One can generalize this game to arbitrary graph. Given a graph, two players delete vertices with weights but they must have the graph remain connected. One needs x units of time to remove a vertex with weight x. What's the proportion of weights the starting player is guaranteed to get irrespective of the configuration? In this general setting, the pizzza problem is a special problem where the graph is restricted to cycle. One can ask the same question to pathes, trees and any other kind of graph. 

However, the problem becomes trivial if the graph is a path. In order to have the graph remain connected, Alice can only pick one of the two ends. Hence, it is possible hide a vertex with large vertex in the middle. For example, consider a path with three vertices, with weights 1, 100, 1 from left to right. Since Alice is only allowed to pick an end, the vertex with weight 100 is immediately available for Bob after Alice's first move. Therefore, Bob can get arbitrary large proportion as long as the graph favors him. 

On the other hand, the case for tree is not trivial at all if the tree is not a path. In this setting, each player is only allowed to delete a leaf of the tree, so it's no use to hide a vertex with large weight at root because players cannot remove it if leaves are still available. Even the simplest non trivial three involves complicated analysis. Consider Figure \ref{tree}. The fact that $A$ is not immediately available for Alice makes it much harder than the case of a cycle with four vertices with arbitrary size. It can be easily shown that Alice is able to get the vertex with the largest weight, so $\alpha > \frac{1}{4}$. In order to find the best possible bound, we can consider very ugly case analysis but it can't be generalized to arbitrary graph. Future efforts are required to find some more general or abstract approaches other than case analysis to shed light on this intriguing problem.

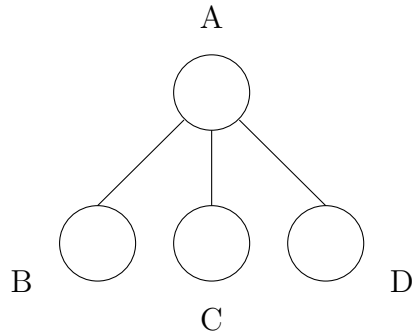
\begin{figure}
\begin{center}
\begin{tikzpicture}[scale=0.5]
  \draw (0,0) circle (1cm);
  \draw (3,-4) circle (1cm);
  \draw (-3,-4) circle (1cm);
  \draw (0,-4) circle (1cm);
  \draw (0.727,-0.727) -- (3,-3);
  \draw (-0.727,-0.727) -- (-3,-3);
  \draw (0,-1) -- (0,-3);
  \node at (0,2) {A};
  \node at (-5,-5) {B};
  \node at (0,-6) {C};
  \node at (5,-5) {D};
\end{tikzpicture}
\end{center}
\caption{A simple tree. \label{tree}}
\end{figure}

\textbf{Acknowledgement}. I am obliged to Wesley Pegden for his offering to be my research advisor. 

\begin{center}

\end{center}

\end{document}